\title{Critical Multitype Branching Processes with Random Migration}
\author{}
\date{}
\newcommand{\R}{\mathbb{R}}
\newcommand{\Z}{\mathbb{Z}}
\newcommand{\N}{\mathbb{N}}
\newcommand{\bZ}{\boldsymbol{Z}}
\newcommand{\bz}{\boldsymbol{z}}
\newcommand{\ev}[1]{\operatorname{E}\! \left[ #1 \right]}
\newcommand{\OO}{\operatorname{O}}
\newcommand{\oo}{\operatorname{o}}
\newcommand{\dif}{\,\mathrm{d}}
\newcommand{\var}[1]{\operatorname{Var}\! \left[ #1 \right]}
\newcommand{\evcond}[2]{\operatorname{E} \left[ #1 \;\middle|\; #2 \right]} 
\newtheorem{theorem}{Theorem}
\newtheorem{proposition}{Proposition}
\newtheorem{lemma}{Lemma}
\theoremstyle{definition}
\newtheorem*{remark}{Remark}
\newcommand{\proofend}{\hfill\mbox{$\Box$}}
\author{Miguel Gonz\'alez\ \footnote{Departamento de Matemáticas, Facultad de Ciencias and Instituto de Computaci\'on Cient\'ifica Avanzada, Universidad de Extremadura, Badajoz, Spain. e-mail address: \url{mvelasco@unex.es}. ORCID: \href{https://orcid.org/0000-0001-7481-6561}{0000-0001-7481-6561}.}
\and Pedro Mart\'in-Ch\'avez \footnote{Departamento de Matemáticas, Facultad de Ciencias, Universidad de Extremadura, Badajoz, Spain.  e-mail address: \url{pedromc@unex.es}. ORCID: \href{https://orcid.org/0000-0001-5530-3138}{0000-0001-5530-3138}.}
\and In\'es del Puerto\  \footnote{Departamento de Matemáticas, Facultad de Ciencias and Instituto de Computaci\'on Cient\'ifica Avanzada, Universidad de Extremadura,  Badajoz, Spain. e-mail address: \url{idelpuerto@unex.es}. ORCID: \href{https://orcid.org/0000-0002-1034-2480}{0000-0002-1034-2480}.}
 }
\begin{document}
\maketitle

\begin{abstract}
The aim of this paper is to introduce a multitype branching process with random migration following the research initiated with the Galton--Watson process with migration introduced in \cite{YM80}. {We} focus our attention in what we call the critical case. Sufficient conditions are provided for the process to have unlimited growth or not. Furthermore, using suitable normalizing sequences, we study the asymptotic distribution of the process. Finally, {we obtain} a Feller-type diffusion approximation.\end{abstract}

{\bf Keywords}: {multitype} branching processes; random migration; unlimited growth; rate of convergence; diffusion approximation; critical case. 

{\bf MSC2020:} 60J80
\section{Introduction}

The Galton--Watson branching processes with migration have been studied extensively in the literature.  The readers interested in the origin of these models are referred to the surveys \cite{Pa06}, \cite{Ra95} and \cite{VZ93}. These are  stochastic models used in population dynamic and other fields to describe the evolution of populations in which individuals produce offspring according to some probability distribution (often assumed to be independent and identically distributed),   and also have the possibility of migrating. Migration can  involve individuals leaving the population (emigrants) or individuals entering  the population from an external source (immigrants).

In this paper we generalized the  Galton--Watson process with migration introduced in \cite{YM80}. Let $\N$,  $\Z_+$, $\Z$, $\R$ and $\R_+$ be the {sets} of positive integers, non-negative integers, integers, real numbers, and non-negative real numbers, respectively. {Let} 
$\{X_{k,j}: k\in \Z_+, j\in \N\}$ be  a sequence of $\Z_+$-valued   independent and identically distributed (i.i.d.) random variables (offspring variables). On the same probability space, let  
$\{ I_k\}_{k\in\Z_+}$ be defined as a set of $\Z_+$-valued i.i.d. (immigration) random
variables, independent of the offspring variables. A discrete time homogeneous Markov chain $(Y_k)_{k\in \Z_+}$ is called a Galton--Watson process with migration (GWPM) if $Y_0=0$ almost surely and, for $k\in \Z_+$,
$$
Y_{k+1}= \sum_{j=1}^{Y_k} X_{k,j}+M_{k+1}, 
$$
(with $\sum_1^0=0$), where for $p,q,r\geq 0$ such that $p+q+r=1$,
\begin{eqnarray} \label{M+}
M_{k+1}=\left\{
       \begin{array}{lll}
    
    \ \  0                & \mbox{with probability} \ p,  & \quad \mbox{(no migration)},\\
     \ \  I_{k+1} & \mbox{with probability} \ q, &  \quad \mbox{(immigration)},\\
    -X_{k,1}\mathds{1}_{\{Y_k>0\}}  & \mbox{with probability} \ r,  &  \quad \mbox{(emigration)},  
   
        \end{array}
   \right.
\end{eqnarray}
is the migration component. {Here $\mathds{1}_A$ is} the indicator function of a set $A$. As usual  $Y_k$ represents the population size at generation $k$ and $X_{k,j}$ {is} the number of offspring {of} the $j$-th individual at the $k$-th generation. {Notice} that the model considers the possibility that {only a single} family can emigrate.

This model was studied in depth in the papers \cite{Dy97}, \cite{Kh80}, \cite{YM83} and  \cite{YM84c}. Later, the model of GWPM introduced in \cite{YM80} was {extended} in \cite{YY96} and \cite{YY00} {by} allowing two types of emigration (family and individual) as well as state-dependent immigration. 

The aim of this paper is to introduce a more general branching process with migration, by considering different types of individuals in the population, and a more general scheme of immigration {and} emigration components. In Section \ref{sec2}, {we define} the muti-type branching process with random  migration (MBPM), and {obtain} its first and second conditional moments. Focusing our attention in what we call critical case, {we prove} sufficient conditions for the unlimited growth or not of the process in Section \ref{sec3}. Finally, Section \ref{sec4} is dedicated to the study of the limiting behaviour of the process. First, {we consider} several normalizing sequences to obtain different limiting distributions, namely, gamma and normal distributions. Secondly, we consider a sequence of appropriately scaled random step
functions formed from a critical MBPM and prove its weak convergence to a Feller-type diffusion process. {The proof of this result is relegated to the Appendix.}

\section{Probability model}\label{sec2}
Let us fix a probability space $(\Omega, \mathcal{F},\mathrm{P})$ where all the random vectors are defined and let  $p\in\N$ be the  dimension of the vectors. 
For $\bz=(z_1,\ldots,z_p)^\top \in\R^p$, let 
   $\bz^+ =(z_1^+,\ldots,z_p^+)^\top \in\R_+^p$, 
 where  $x^+$ stands for the positive part of $x\in\R$ and  $^\top$ for the transpose.

Let  $\{\boldsymbol{X}_{k,j,i} = ({X}_{k,j,i,1},\ldots,{X}_{k,j,i,p})^\top:i\in\{1,\ldots,p\},\, k\in\Z_+,\,j\in\N\}$ be
 $\Z_+^p$-valued independent random vectors, i.d. for each $i\in\{1,\ldots,p\}$. {Let 
$\{\boldsymbol{D}_{k}(\bz) = (D_{k,1}(\bz),\ldots,D_{k,p}(\bz))^\top : k\in\Z_+\}$ be  $\Z_+^p$--valued  i.d. random vectors for each 
$\bz \in\Z_+^p$ such that $D_{k,i}(\bz)=0$ if $z_i=0$, and it has range on $[1,z_i]$ if $z_i>0$, $i\in\{1,\ldots,p\}$. Let 
$\{\boldsymbol{I}_{k} (\bz) = (I_{k,1}(\bz),\ldots,I_{k,p}(\bz))^\top : k\in\Z_+\}$ be
 also $\N^p$--valued  identically distributed random vectors for each 
$\bz \in\Z_+^p$.} For $\boldsymbol{z}\in\Z_+^p$, let  $\{\boldsymbol{M}_{k}(\bz) = (M_{k,1}(\bz),\ldots,M_{k,p}(\bz))^\top : k\in\Z_+\}$ 
be identically distributed $\Z^p$-valued random vectors  with range contained in 
$[-z_1,\infty) \times \cdots \times [-z_p,\infty)${, defined}, for all $k\in\Z_+$ and $i\in\{1,\ldots,p\}$, {as}
\begin{equation*}
    M_{k,i}(\bz) = 
    \begin{cases}
    \ \  0 & \text{ with probability } p_i(\bz)\in[0,1] ,\\
      \ \  I_{k,i}(\bz) & \text{ with probability } q_i(\bz)\in[0,1] ,\\
        -D_{k,i}(\bz) & \text{ with probability } r_{i}(\bz)\in[0,1] ,\\
            \end{cases}
\end{equation*}
with $p_i(\bz)+q_i(\bz)+r_i(\bz)=1$, and
 $r_i(\bz)$ vanishing if $z_i = 0$. 

Let  $\bZ_0 = (Z_{0,1},\ldots,Z_{0,p})^\top$ be a $\Z_+^p$--valued random vector {(the initial generation)} and let us assume that $\{\boldsymbol{Z}_{0} ,\boldsymbol{X}_{k,j,i} , \boldsymbol{M}_{k}(\bz): k\in\Z_+,\, j\in\N,\, i\in\{1,\ldots,p\},\, \boldsymbol{z}\in\Z_+^p\}$ are
independent $\Z^p$--valued random vectors. 

We define a multitype branching process with random migration (MBPM) as a sequence of  $p$-dimensional random vectors, ${(\boldsymbol{Z}_{k})_{k\in\Z_+}}$,  recursively as 
\begin{equation}\label{mod}
    \boldsymbol{Z}_{k+1}  = 
    \sum_{i=1}^{p} \sum_{j=1}^{Z_{k,i}+M_{k,i}(\bZ_k)} \boldsymbol{X}_{k,j,i}, 
    \quad \quad k\in\Z_+.
\end{equation}
Notice that \eqref{mod} generalizes (\ref{M+}) in several {directions}. Model \eqref{mod} considers that there exist $p$-types of individuals {and} each one can give birth {to} individuals of any type. Moreover, several individuals (not necessary {from} the same family) can leave the population. {The} immigrants  {produce offspring} in the same generation in which they arrive.  

Let us introduce notations for some moments. For $i\in\{1,\ldots,p\}$ and $\bz\in\Z_+^p$,
\begin{align*}
    \boldsymbol{m}_i &=  \ev{\boldsymbol{X}_{0,1,i}}\in\R_+^p, & \mathsf{m}&=( \boldsymbol{m}_1,\ldots,  \boldsymbol{m}_p)\in\R_+^{p\times p}\\
    \mathsf{\Sigma}_i &= \var{\boldsymbol{X}_{0,1,i}}\in\R^{p\times p}, & \boldsymbol{\mathsf{\Sigma}} &= (\mathsf{\Sigma}_1, \ldots,\mathsf{\Sigma}_p)\in (\R^{p\times p})^p\\
    a_i(\bz)&=\ev{I_{0,i}(\bz)}\in\R_+, & \boldsymbol{a}(\bz)&=( a_1(\bz),\ldots, a_p(\bz))^\top\in \R_+^{p}\\
    b_i(\bz)&=\ev{D_{0,i}(\bz)}\in\R_+, & \boldsymbol{b}(\bz)&=( b_1(\bz),\ldots, b_p(\bz))^\top\in \R_+^{p}
\end{align*}
Let us denote
the operators 
\begin{eqnarray*}
 \circ :\R^p \times \R^{p}
 &\to&  \R^p,\\
 (\bz_1,\bz_2)&\to&\bz_1\circ\bz_2=(z_{1,1}z_{2,1},\ldots,z_{1,p}z_{2,p})^\top 
\end{eqnarray*}
 and 
 \begin{eqnarray*}
 \odot :\R^p \times (\R^{p\times p})^p  
 &\to&  \R^{p\times p}\\
 (\boldsymbol{z},\boldsymbol{\mathsf{A}})\hspace*{1.0cm}& \to &\boldsymbol{z}\odot\boldsymbol{\mathsf{A}} 
    =  \sum_{i=1}^p z_i\mathsf{A}_i.
\end{eqnarray*} 
Notice that the operator $\circ$ is the Hadamard product of matrices (see \cite{hj}) applied to column vectors. 

It is easy to see that a MBPM is a homogeneous
multitype Markov chain, whose state space  is $\Z_+^p$. It is interesting to notice that the state  ${\bf 0}$ is absorbing if and if $q_i({\bf 0})=0$ for every $i\in\{1,\ldots,p\}$. In addition, if 
$$\mathrm{P}\left(\bigcap_{i=1}^p\left(\{M_{0,i}(\bz)=-z_i\} \cup \{M_{0,i}(\bz)>-z_i,  \boldsymbol{X}_{0,j,i}= {\bf 0}, j=1, \ldots, z_i+M_{0,i}(\bz)\}\right)\right)>0,$$
for each non-null state $\bz$, then 
$$\mathrm{P}(\boldsymbol{Z}_n= \bz \mbox{ for some } n>0\mid \boldsymbol{Z}_0=\bz)<1,$$
{because $\mathrm{P}(\boldsymbol{Z}_n = \boldsymbol{0}\mid \boldsymbol{Z}_0=\bz)>0$ for all $n>0$}. Consequently,  such state $\bz$ is transient. Under the assumptions that 
 ${\bf 0}$ is an absorbing state and that each non null state is transient, making use of Markov chain theory, we have the extinction--explosion {duality}
\begin{equation}\label{ex} \mathrm{P}(\boldsymbol{Z}_n\to{\bf 0 })+\mathrm{P}(\|\boldsymbol{Z}_n\|\to\infty)=1.\end{equation}

In the following proposition we establish the first and second conditional moments of the process. Let us consider the canonical filtration of the process $\mathcal{F}_k =\sigma(\boldsymbol{Z}_{0},\ldots,\boldsymbol{Z}_{k}),$ $k\in\Z_+$. 
 For $\bz\in \Z_+^p$, let
$$
\boldsymbol{h}(\bz) = \ev{\boldsymbol{M}_0(\bz)}= \boldsymbol{a}(\bz)\circ \boldsymbol{q}(\bz)-\boldsymbol{b}(\bz)\circ \boldsymbol{r}(\bz),
$$
{where $\boldsymbol{q}(\bz)=( q_1(\bz),\ldots, q_p(\bz))^\top$ and $\boldsymbol{r}(\bz)=( r_1(\bz),\ldots, r_p(\bz))^\top$.}

\begin{proposition}\label{p1}
Let ${(\boldsymbol{Z}_{k})_{k\in\Z_+}}$ be a  MBPM. {Then}, for each $k\in\N$, 
      \begin{align}
        \evcond{\boldsymbol{Z}_{k}}{\mathcal{F}_{k-1}} &= \mathsf{m}(\boldsymbol{Z}_{k}+\boldsymbol{h}(\boldsymbol{Z}_{k})), \label{eq-conditional-expected-value} \\
        \var{\boldsymbol{Z}_{k}\mid \mathcal{F}_{k-1}}& =  (\boldsymbol{Z}_{k}+\boldsymbol{h}(\boldsymbol{Z}_{k}))\odot\boldsymbol{\mathsf{\Sigma}}+\mathsf{m} \var{\boldsymbol{{M}}_{0}(\boldsymbol{Z}_{k})}\mathsf{m}^\top,\label{eq-conditional-variance}
    \end{align}
    almost surely.
   \end{proposition}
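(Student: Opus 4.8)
The plan is to run the one-step recursion \eqref{mod} with $k$ replaced by $k-1$ --- which expresses $\boldsymbol{Z}_k$ in terms of $\boldsymbol{Z}_{k-1}$, the migration vector $\boldsymbol{M}_{k-1}(\boldsymbol{Z}_{k-1})$ and the offspring vectors of generation $k-1$ --- and to peel off the generation-$(k-1)$ randomness in two stages: first the offspring vectors conditionally on the migration, then the migration itself, using the global independence of $\{\boldsymbol{Z}_0,\boldsymbol{X}_{k,j,i},\boldsymbol{M}_k(\bz)\}$. Since $\boldsymbol{Z}_\ell$ is, for each $\ell$, a measurable function of $\boldsymbol{Z}_0$ and of $\{\boldsymbol{M}_m(\cdot),\boldsymbol{X}_{m,\cdot,\cdot}\}_{m\le\ell-1}$, the $\sigma$-field $\mathcal{F}_{k-1}$ is contained in $\sigma(\boldsymbol{Z}_0,\{\boldsymbol{M}_m(\cdot),\boldsymbol{X}_{m,\cdot,\cdot}\}_{m\le k-2})$ and is therefore independent of the pair $(\{\boldsymbol{M}_{k-1}(\bz):\bz\in\Z_+^p\},\{\boldsymbol{X}_{k-1,j,i}\})$, whereas $\boldsymbol{Z}_{k-1}$ is $\mathcal{F}_{k-1}$-measurable. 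Hence, by the standard ``freezing'' property of conditional expectation (if $\xi$ is $\mathcal{G}$-measurable and $Y$ is independent of $\mathcal{G}$, then $\evcond{g(\xi,Y)}{\mathcal{G}}=G(\xi)$ with $G(\bz)=\ev{g(\bz,Y)}$), each conditional moment below is computed by fixing $\boldsymbol{Z}_{k-1}=\bz$, treating $\bz$ as deterministic, and substituting $\bz=\boldsymbol{Z}_{k-1}$ at the end, so that $\evcond{\boldsymbol{Z}_k}{\mathcal{F}_{k-1}}$ and $\var{\boldsymbol{Z}_k\mid\mathcal{F}_{k-1}}$ come out as $\mathcal{F}_{k-1}$-measurable functions of $\boldsymbol{Z}_{k-1}$; this also accounts for the ``almost surely''. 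I would also note that $D_{k-1,i}(\bz)\in[1,z_i]$ together with $r_i(\bz)=0$ when $z_i=0$ guarantees $z_i+M_{k-1,i}(\bz)\ge 0$, so the inner sums in \eqref{mod} always have a well-defined nonnegative number of terms.

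For \eqref{eq-conditional-expected-value}, after fixing $\boldsymbol{Z}_{k-1}=\bz$ I would condition additionally on $\boldsymbol{M}_{k-1}(\bz)=\boldsymbol{w}$. Given this, $\boldsymbol{Z}_k=\sum_{i=1}^{p}\sum_{j=1}^{z_i+w_i}\boldsymbol{X}_{k-1,j,i}$ is a sum of a deterministic number of independent vectors, those of type $i$ having common mean $\boldsymbol{m}_i$, so its conditional expectation equals $\sum_{i=1}^{p}(z_i+w_i)\boldsymbol{m}_i=\mathsf{m}(\bz+\boldsymbol{w})$. Averaging over $\boldsymbol{M}_{k-1}(\bz)$, which is distributed as $\boldsymbol{M}_0(\bz)$, and using linearity of $\bz'\mapsto\mathsf{m}\bz'$ yields $\mathsf{m}(\bz+\boldsymbol{h}(\bz))$; substituting $\bz=\boldsymbol{Z}_{k-1}$ gives the claim.

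For \eqref{eq-conditional-variance} I would apply the law of total variance with respect to $\boldsymbol{M}_{k-1}(\bz)$, still conditionally on $\boldsymbol{Z}_{k-1}=\bz$. Given $\boldsymbol{M}_{k-1}(\bz)=\boldsymbol{w}$, the vector $\boldsymbol{Z}_k$ is a sum of independent vectors with covariance matrices $\mathsf{\Sigma}_i$, so its conditional covariance matrix is $\sum_{i=1}^{p}(z_i+w_i)\mathsf{\Sigma}_i=(\bz+\boldsymbol{w})\odot\boldsymbol{\mathsf{\Sigma}}$; taking expectation over $\boldsymbol{w}$ and using linearity of $\odot$ in its first argument turns the ``expected conditional variance'' term into $(\bz+\boldsymbol{h}(\bz))\odot\boldsymbol{\mathsf{\Sigma}}$. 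The ``variance of the conditional mean'' term is $\var{\mathsf{m}(\bz+\boldsymbol{M}_{k-1}(\bz))}=\mathsf{m}\,\var{\boldsymbol{M}_{k-1}(\bz)}\,\mathsf{m}^\top=\mathsf{m}\,\var{\boldsymbol{M}_0(\bz)}\,\mathsf{m}^\top$, because $\bz$ is a constant. Adding the two terms and substituting $\bz=\boldsymbol{Z}_{k-1}$ gives \eqref{eq-conditional-variance}.

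The computation itself is routine: linearity of expectation together with the elementary first and second moment formulas for a sum of a fixed number of independent vectors. The only point that genuinely needs care is the conditioning bookkeeping --- justifying the ``freezing'' of the random argument $\boldsymbol{Z}_{k-1}$ inside the migration family $\boldsymbol{M}_{k-1}(\cdot)$, and keeping the two nested conditionings (offspring given migration, and then migration) separate so that no cross term is dropped when the law of total variance is applied. Everything else is just unwinding the definitions of $\mathsf{m}$, $\boldsymbol{\mathsf{\Sigma}}$, $\circ$, $\odot$ and $\boldsymbol{h}$.
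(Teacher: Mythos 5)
Your proposal is correct and follows essentially the same route as the paper: condition on $\boldsymbol{Z}_{k-1}=\bz$ via the Markov property, apply the tower property and the law of total variance with respect to the migration vector, compute the moments of the resulting random sums, and substitute back. The only differences are that you spell out the measurability/independence bookkeeping more explicitly than the paper does, and that you write the right-hand sides in terms of $\boldsymbol{Z}_{k-1}$ (consistent with the paper's own computation of $\ev{\boldsymbol{Z}_{k+1}\mid\boldsymbol{Z}_k=\bz}$), which is the correct reading of the statement.
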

\begin{proof}
  Using the Markov property and the independence between the random  vectors in the model, we have
 \begin{eqnarray*}
\ev{\boldsymbol{Z}_{k+1}\mid \boldsymbol{Z}_{k}=\bz}&=& \ev{ \sum_{i=1}^{p} \sum_{j=1}^{{z_i}+M_{k,i}(\bz)} \boldsymbol{X}_{k,j,i}}=\ev{\ev{\sum_{i=1}^{p} \sum_{j=1}^{{z_i}+M_{k,i}(\bz)} \boldsymbol{X}_{k,j,i}\mid M_{k,i}(\bz)}}\\&=&\sum_{i=1}^p (z_{i}+\ev{M_{k,i}(\bz)})\boldsymbol{m}_i= \mathsf{m} (\bz+\ev{\boldsymbol{M}_0(\bz)})\\&=&\mathsf{m}(\bz+\boldsymbol{h}(\bz)),\end{eqnarray*}
\begin{eqnarray*}
 \var{\boldsymbol{Z}_{k+1}\mid \boldsymbol{Z}_{k}=\bz}&=& \ev{\var{\sum_{i=1}^{p} \sum_{j=1}^{{z_i}+M_{k,i}(\bz)} \boldsymbol{X}_{k,j,i}\mid M_{k,i}(\bz)}}\\&&+\var{\ev{\sum_{i=1}^{p} \sum_{j=1}^{{z_i}+M_{k,i}(\bz)} \boldsymbol{X}_{k,j,i}\mid M_{k,i}(\bz)}}\\&=&\sum_{i=1}^p
(z_{i}+\ev{M_{k,i}(\bz)})\mathsf{\Sigma}_i+\var{\mathsf{m}\boldsymbol{M}_{k}(\bz)}\\ &=& (\bz+\ev{\boldsymbol{M}_0(\bz)})\odot\boldsymbol{\mathsf{\Sigma}}+\mathsf{m} \var{\boldsymbol{{M}}_{0}(\bz)}\mathsf{m}^\top\\
& = & (\bz+\boldsymbol{h}(\bz))\odot\boldsymbol{\mathsf{\Sigma}}+\mathsf{m} \var{\boldsymbol{{M}}_{0}(\bz)}\mathsf{m}^\top.
\end{eqnarray*}
    
\end{proof}

\section{Unlimited growth of the process}\label{sec3}
 In this section we focus our attention in studying $\mathrm{P}(\|\boldsymbol{Z}_n\|\to \infty)$. {Here $\boldsymbol{Z}_n$ is} what we call a \textit{critical} MBPM. {It satisfies} the following hypothesis:
\begin{quote}
{\bf Hypothesis A:} The offspring mean matrix $\mathsf{m}$ is primitive with Perron--Frobenius eigenvalue equals 1.
\end{quote}

The Perron--Frobenius' theorem (see \cite{hj}) guarantees that there exist left and right eigenvectors, $\boldsymbol{u}=(u_1\ldots,u_p)^\top$ and $\boldsymbol{v}=(v_1,\ldots,v_p)^\top$, respectively, of the Perron-Frobenius eigenvalue such that all their coordinates are {positive},  $\sum_{i=1}^p u_i=1$ and $\boldsymbol{u}^\top \boldsymbol{v} =1$.

{We} also consider the following hypothesis {for} the expected value of the random migration component:
\begin{quote}
{\bf Hypothesis B:} The {vector} $\boldsymbol{h}(\bz)=(h_1(\bz),\ldots, h_p(\bz))^\top$, $\bz\in \Z_+^p$, satisfies $h_i(\bz)=\oo(\|\bz\|)$ as $\|\bz\| \to \infty$, for every $i\in\{1,\ldots,p\}$.
\end{quote}
It is interesting to notice that under Hypotheses A and B the mean growth rates of the functional $\boldsymbol{u}^\top\boldsymbol{Z}_{n}$ converge to 1. Indeed, as $\|\bz\| \to \infty$,
$$\frac{1}{\boldsymbol{u}^\top\bz} \ev{\boldsymbol{u}^\top\boldsymbol{Z}_{n+1} \mid \boldsymbol{Z}_n=\bz} = \frac{1}{\boldsymbol{u}^\top\bz} {\boldsymbol{u}^\top \mathsf{m} (\bz + \boldsymbol{h}(\bz))} = \frac{1}{\boldsymbol{u}^\top\bz} {\boldsymbol{u}^\top (\bz + \boldsymbol{h}(\bz))} = 1 + \frac{\boldsymbol{u}^\top \boldsymbol{h}(\bz)}{\boldsymbol{u}^\top\bz} \to 1.$$

Let us define $\sigma^2(\bz)\coloneqq \var{\boldsymbol{u}^\top\boldsymbol{Z}_{n+1}\mid \boldsymbol{Z}_{n}=\bz}$, $\bz\in \Z_+^p$. Taking into account Hypothesis A and Proposition \ref{p1}, it is easy to obtain that
$$
\sigma^2(\bz)=  \boldsymbol{u}^\top\left((\bz+\boldsymbol{h}(\bz))\odot\boldsymbol{\mathsf{\Sigma}}+ \var{\boldsymbol{M}_{0}(\bz)}\right)\boldsymbol{u}.
$$
{Next} we establish sufficient conditions {under which} the process {does not} have unlimited growth.

\begin{theorem}\label{explocero}
Let ${(\boldsymbol{Z}_{k})_{k\in\Z_+}}$ be a  MBPM satisfying Hypotheses A and B. Then $\mathrm{P}(\|\boldsymbol{Z}_{n}\|\to {\infty})=0$  if at least one of the following conditions holds:

\begin{itemize}
 \item[(a)] For each $i\in\{1,\ldots, p\},$  $h_i(\bz)\leq 0$ for all $\bz\in\Z_+^p$ with $\|\bz\|$ large enough.
    \item[(b)] \begin{equation}\label{limsup}\limsup_{\|\bz\|\to\infty}\frac{2({\boldsymbol{u}^\top\bz} )(\boldsymbol{u}^\top\boldsymbol{h}(\bz)) }{\sigma^{2}(\bz)}<1,\end{equation}
     and, for every $i\in\{1,\ldots, p\}$,
     \begin{equation}\label{cond1}
         \ev{|z_i+M_{0,i}(\bz)|^{1+\delta/2}}=\oo((\boldsymbol{u}^\top\bz )^{1+\delta} \sigma^2(\bz)),
     \end{equation}
  and 
\begin{equation}\label{cond2}
         \ev{|M_{0,i}(\bz)-h_i(\bz)|^{2+\delta}}=\oo((\boldsymbol{u}^\top\bz )^{1+\delta} \sigma^2(\bz)),
     \end{equation}
  as $\|\bz\|\to \infty$, for some $0<\delta\leq 1$.
\end{itemize}
  \end{theorem}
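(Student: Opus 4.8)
The plan is to work with the one-dimensional projection $W_n \coloneqq \boldsymbol{u}^\top \boldsymbol{Z}_n$, which by Hypothesis A and Proposition~\ref{p1} is a nonnegative process with conditional mean $\evcond{W_{n+1}}{\mathcal{F}_n} = \boldsymbol{u}^\top(\boldsymbol{Z}_n + \boldsymbol{h}(\boldsymbol{Z}_n)) = W_n + \boldsymbol{u}^\top\boldsymbol{h}(\boldsymbol{Z}_n)$ and conditional variance $\sigma^2(\boldsymbol{Z}_n)$. The key point is that on the event $\{\|\boldsymbol{Z}_n\|\to\infty\}$ we have $\boldsymbol{u}^\top\boldsymbol{Z}_n\to\infty$ as well (since $\boldsymbol u$ has strictly positive entries, $\boldsymbol u^\top\bz$ is comparable to $\|\bz\|$), so it suffices to show $\prob{W_n\to\infty}=0$. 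For this I would apply a martingale/supermartingale criterion for non-divergence of nonnegative Markov-type sequences: find a concave increasing function $\varphi$ (e.g. $\varphi(x)=\log(1+x)$, or $\varphi(x)=x^{1-\varepsilon}$ or a bounded variant such as $\varphi(x)=1-1/(1+x)$) such that $(\varphi(W_n))_{n\ge N}$ is a supermartingale (or nearly so, up to summable corrections) on the region where $\|\boldsymbol Z_n\|$ is large; boundedness of a nonnegative supermartingale then forces it to converge a.s.\ to a finite limit, contradicting $W_n\to\infty$ unless that event has probability zero. This is exactly the strategy used for the classical GWPM, adapted to the functional $\boldsymbol u^\top\boldsymbol Z_n$.

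For part (a), the argument is cleaner: when $h_i(\bz)\le 0$ for all $i$ and $\|\bz\|$ large, we get $\evcond{W_{n+1}}{\mathcal{F}_n} = W_n + \boldsymbol u^\top\boldsymbol h(\boldsymbol Z_n)\le W_n$ on that region, so $(W_{n\wedge\tau})$ is a nonnegative supermartingale where $\tau$ is the exit time from the large-norm region. A nonnegative supermartingale converges a.s., hence $W_n$ cannot tend to infinity; one still must rule out the process staying forever in the large-norm region with $W_n\to\infty$, which the supermartingale convergence handles directly. For part (b), $W_n$ may have a positive (but small relative to the variance) upward drift, so a linear function will not give a supermartingale; here I would use a concave $\varphi$ and a second-order Taylor expansion: $\evcond{\varphi(W_{n+1})-\varphi(W_n)}{\mathcal F_n}\approx \varphi'(W_n)\,\boldsymbol u^\top\boldsymbol h(\boldsymbol Z_n) + \tfrac12\varphi''(W_n)\,\sigma^2(\boldsymbol Z_n)$. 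The hypothesis \eqref{limsup} is precisely what makes the negative second-order term dominate the positive drift term (for a suitable power $\varphi(x)=x^{\gamma}$ the ratio $2(\boldsymbol u^\top\bz)(\boldsymbol u^\top\boldsymbol h(\bz))/\sigma^2(\bz)$ being eventually $<1$ yields $\varphi'\cdot(\text{drift}) + \tfrac12\varphi''\cdot(\text{variance}) < 0$). The moment conditions \eqref{cond1} and \eqref{cond2} are there to control the Taylor remainder: they ensure the contribution of the increments $W_{n+1}-W_n$ in the regime where the linearization fails (large jumps, where $\varphi$ is not well approximated by two terms) is $\oo$ of the main negative term, via a truncation argument splitting on $\{|W_{n+1}-W_n|\le \eta W_n\}$ and its complement and bounding the latter by Markov's inequality with the $(2+\delta)$- and $(1+\delta/2)$-moments.

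The main obstacle I anticipate is making the supermartingale estimate rigorous uniformly in $\bz$ over the large-norm region while handling the remainder term: one must choose the exponent $\gamma$ (or the concave function) and the truncation level carefully so that the first-order drift term, the second-order variance term, and the higher-order remainder are all correctly ordered as $\|\bz\|\to\infty$, using $\sigma^2(\bz)\asymp \boldsymbol u^\top\bz$ up to the $\boldsymbol h$-correction and the normalization $\boldsymbol u^\top\bz\asymp\|\bz\|$. A secondary technical point is the localization: since the hypotheses only hold for $\|\bz\|$ large, one works with the stopped process and argues that either the process exits the large-norm region infinitely often (so it does not diverge) or it eventually stays in it (where the supermartingale property applies); combining the two cases to conclude $\prob{W_n\to\infty}=0$ requires a standard but slightly delicate decomposition. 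I would also invoke \eqref{ex} only at the end, if the paper wants to phrase the conclusion as $\prob{\boldsymbol Z_n\to\boldsymbol 0}=1$, though the statement as given asks only for $\prob{\|\boldsymbol Z_n\|\to\infty}=0$.
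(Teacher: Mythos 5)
Your plan is essentially a blind reconstruction of the machinery the paper simply cites: the published proof handles part (a) by invoking Theorem 3 of \cite{gmm} and part (b) by invoking Theorem 4 of \cite{gmm}, which are precisely the Kersting-type stopped-supermartingale criteria for the functional $\boldsymbol{u}^\top\boldsymbol{Z}_n$ of a controlled multitype process that you sketch (localization to the large-norm region included). Your choice of concave function is correct: with $\varphi(x)=x^{\gamma}$ and $0<\gamma<1-\limsup\bigl(2(\boldsymbol{u}^\top\bz)(\boldsymbol{u}^\top\boldsymbol{h}(\bz))/\sigma^2(\bz)\bigr)$, the leading terms $\varphi'(W)\,\boldsymbol{u}^\top\boldsymbol{h}(\bz)+\tfrac12\varphi''(W)\sigma^2(\bz)$ are eventually negative exactly under \eqref{limsup}, and part (a) needs only the linear function. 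So far this matches the content of the cited results.

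The genuine gap is the bridge between the hypotheses \eqref{cond1}--\eqref{cond2} and the increments of $W_n=\boldsymbol{u}^\top\boldsymbol{Z}_n$. You treat these conditions as directly controlling the Taylor remainder via a truncation of $W_{n+1}-W_n$, but neither condition mentions $W_{n+1}-W_n$; what the criterion requires (and what is the \emph{only} computation the paper actually performs) is the bound \eqref{cond0} on the conditional $(2+\delta)$-th central moment $\xi(\bz)$ of $\boldsymbol{u}^\top\boldsymbol{Z}_{n+1}$. Obtaining it forces you to decompose the centered increment as $\sum_{i}\sum_{j=1}^{z_i+M_{0,i}(\bz)}\boldsymbol{u}^\top(\boldsymbol{X}_{0,j,i}-\boldsymbol{m}_i)+\sum_i\bigl(M_{0,i}(\bz)-h_i(\bz)\bigr)\boldsymbol{u}^\top\boldsymbol{m}_i$, apply the $C_r$-inequality, and then apply the Marcinkiewicz--Zygmund inequality (Lemma \ref{l3}) \emph{conditionally on} $M_{0,i}(\bz)$ to the random-length offspring sum; this is what converts a $(2+\delta)$-th moment of a sum of $z_i+M_{0,i}(\bz)$ centered terms into $(z_i+M_{0,i}(\bz))^{(2+\delta)/2}=(z_i+M_{0,i}(\bz))^{1+\delta/2}$, and it is the only explanation for why \eqref{cond1} carries the exponent $1+\delta/2$ while \eqref{cond2} carries $2+\delta$ (this is exactly the bound \eqref{eq7} in the paper). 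Without this step your moment hypotheses cannot be matched to the hypotheses of the supermartingale criterion you are building, so the proof as proposed does not close. The localization issues you flag (exit times from the large-norm region, combining the two cases) are real but standard and are handled inside the cited theorems.
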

\begin{proof} If \textit{(a)} is satisfied, Theorem 3 in \cite{gmm} {implies} that $\mathrm{P}(\|\boldsymbol{Z}_n\|\to\infty)=0$. If \textit{(b)} holds, the result is a consequence of Theorem 4 in \cite{gmm}, {provided} that, as $\|\bz\|\to \infty$,
\begin{equation}\label{cond0}
\xi(\bz)\coloneqq    \ev{\left|\boldsymbol{u}^\top\boldsymbol{Z}_{n+1} -\ev{\boldsymbol{u}^\top\boldsymbol{Z}_{n+1} \mid \boldsymbol{Z}_n}\right|^{2+\delta} \mid \boldsymbol{Z}_n=\bz}=\oo((\boldsymbol{u}^\top\bz)^{1+\delta} \sigma^2(\bz)).
\end{equation}
Indeed,    using the fact that for every $a,b\in\R$ and $r>0$ there exists a positive constant $C_r$ such that $|a+b|^r\leq
C_r(|a|^r+|b|^r)$ ($C_r$-inequality) and the Marcinkiewicz-Zygmund's inequality (see Lemma \ref{l3} in Appendix), it is easy to check that 
\begin{eqnarray}
  \xi(\bz)&\leq& C_1 \sum_{i=1}^p \left(\ev{\left |\sum_{j=1}^{z_i+{M}_{0,i}(\bz)}\boldsymbol{u}^\top(\boldsymbol{X_{0,j,i}}-\boldsymbol{m}_i)\right|^{2+\delta}}+\ev{|M_{0,i}(\bz)\boldsymbol{u}^\top\boldsymbol{m}_i-h_i(\bz) \boldsymbol{u}^\top\boldsymbol{m_i}|^{2+\delta}}\right)\nonumber\\
  &\leq &  C_2  \sum_{i=1}^p \left(\ev{(z_i+M_{0,i}(\bz))^{1+\delta/2}} + \ev{|M_{0,i}(\bz)-h_i(\bz)|^{2+\delta}}(\boldsymbol{u}^\top\boldsymbol{m}_i)^{2+\delta} \right),\label{eq7}
\end{eqnarray}
with $C_1$ and $C_2$ certain positive constants. Hence, using \eqref{cond1} and \eqref{cond2}, we obtain \eqref{cond0}.
\end{proof}

\begin{remark}
\begin{enumerate}
\item[]
\item[i)] In hypothesis \textit{(a)} we {assume} that for each type the emigration {dominates} over the immigration {given} large enough population size. 
\item[ii)] To establish {Theorem \ref{explocero}} is not necessary to {require} that the extinction-explosion duality   \eqref{ex} holds. {It} is also valid for migration models with immigration at $\bf 0$, that is, with a reflecting barrier at $\bf 0$. {In the case in which $\bf 0$ is an absorbing state} and the non-null states are transient, the result provides sufficient conditions for the extinction of the population.
\end{enumerate}
\end{remark}

{Next} we study sufficient  conditions {for} a positive probability of unlimited growth of the process. 

  \begin{theorem} \label{teo2}Let ${(\boldsymbol{Z}_{k})_{k\in\Z_+}}$ be a  MBPM  satisfying Hypotheses A and B and such that, {for every non-null $\bz \in \Z_+$,}  
  \begin{equation}\label{eqex}
  \mathrm{P}\left(I_{0,i}(\bz)>0, {\mathbf{X}_{0,j,i}}\not ={\bf 0}, i\in A(\bz), j=1,\ldots z_i+I_{0,i}(\bz)  \right)>0,
  \end{equation}
  where $A(\bz)=\{i\in\{1,\ldots, p\}: z_i>0\}$. Then $\mathrm{P}\left(\|\boldsymbol{Z}_n\|\to \infty\right)>0$ if 
\begin{equation}\label{liminferior}\liminf_{\|\bz\|\to\infty}\frac{2({\boldsymbol{u}^\top\bz} )(\boldsymbol{u}^\top\boldsymbol{h}(\bz)) }{\sigma^{2}(\bz)}>1,
\end{equation}
and, for every $i\in\{1,\ldots, p\}$,
     \begin{equation}\label{cond3}
         \ev{|z_i+M_{0,i}(\bz)|^{1+\delta/2}}=\oo((\boldsymbol{u}^\top\bz )^{1+\delta} (\boldsymbol{u}^\top\boldsymbol{h}(\bz))/(\log(\boldsymbol{u}^\top\bz ))^{1+\alpha})
     \end{equation}
     and 
      \begin{equation}\label{cond4}
          \ev{|M_{0,i}(\bz)-h_i(\bz)|^{2+\delta}}=\oo((\boldsymbol{u}^\top\bz )^{1+\delta} (\boldsymbol{u}^\top\boldsymbol{h}(\bz))/(\log(\boldsymbol{u}^\top\bz ))^{1+\alpha})
     \end{equation}
     as $\|\bz\|\to \infty$, for some $0<\delta\leq 1$ and $\alpha>0$.
  \end{theorem}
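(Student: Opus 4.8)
The plan is to mirror the proof of Theorem~\ref{explocero}: I would deduce the statement from the result in \cite{gmm} giving sufficient conditions for a \emph{positive} probability of unlimited growth of a homogeneous Markov chain on $\Z_+^p$ (the counterpart, with a Lamperti-type \emph{lower} drift bound, of the criterion used in the proof of Theorem~\ref{explocero}). The functional to work with is again $W_n \coloneqq \boldsymbol{u}^\top\boldsymbol{Z}_n$: it is $\R_+$-valued and $\mathcal{F}_n$-adapted, it vanishes exactly on $\{\boldsymbol{Z}_n=\boldsymbol{0}\}$, and since all coordinates of $\boldsymbol{u}$ are strictly positive one has $\|\boldsymbol{Z}_n\|\to\infty$ if and only if $W_n\to\infty$; hence it suffices to show $\mathrm{P}(W_n\to\infty)>0$.

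First I would record the one-step conditional moments of $W_n$. By Proposition~\ref{p1} and Hypothesis~A (so that $\boldsymbol{u}^\top\mathsf{m}=\boldsymbol{u}^\top$), almost surely
\[
\evcond{W_{n+1}}{\mathcal{F}_n}-W_n=\boldsymbol{u}^\top\boldsymbol{h}(\boldsymbol{Z}_n),\qquad \var{W_{n+1}\mid\mathcal{F}_n}=\sigma^2(\boldsymbol{Z}_n),
\]
so the conditional drift is $\boldsymbol{u}^\top\boldsymbol{h}(\bz)$ and the conditional variance is $\sigma^2(\bz)$; the Lamperti lower bound required by the cited criterion is then exactly \eqref{liminferior} (which in particular forces $\boldsymbol{u}^\top\boldsymbol{h}(\bz)>0$ for $\|\bz\|$ large, assuming as usual that $\sigma^2(\bz)>0$ eventually). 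Next, the centered $(2+\delta)$-th conditional moment $\xi(\bz)$, defined as in \eqref{cond0}, is bounded above by the right-hand side of \eqref{eq7} through the very same $C_r$-inequality and Marcinkiewicz--Zygmund argument as in the proof of Theorem~\ref{explocero}; substituting \eqref{cond3} and \eqref{cond4} into that bound yields
\[
\xi(\bz)=\oo\!\left((\boldsymbol{u}^\top\bz)^{1+\delta}\,(\boldsymbol{u}^\top\boldsymbol{h}(\bz))\,\big/\,(\log(\boldsymbol{u}^\top\bz))^{1+\alpha}\right),\qquad\|\bz\|\to\infty,
\]
which is precisely the $(2+\delta)$-th moment hypothesis of the cited criterion (now with the extra logarithmic slack, in contrast to the drift-free version used in Theorem~\ref{explocero}).

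It remains to verify the accessibility ingredient, and this is where condition \eqref{eqex} enters: it guarantees that from every non-null state $\bz$ there is positive probability that each occupied type receives an immigrant and every resident or immigrant individual of an occupied type produces a non-zero offspring vector, so that $\boldsymbol{Z}_1\neq\boldsymbol{0}$ with positive probability; iterating, with positive probability the chain remains in the non-null states and $W_n$ exceeds any prescribed level. Together with the escape-from-large-states provided by \eqref{liminferior}, this lets the cited criterion conclude $\mathrm{P}(W_n\to\infty)>0$. I expect the main obstacle to be exactly this last reduction: unlike Theorem~\ref{explocero}, one must pass from ``excursions started at large states escape with positive probability'' to a genuinely positive probability of unlimited growth \emph{from $\boldsymbol{Z}_0$}, which requires both the version of the criterion in \cite{gmm} stated for a functional of the chain whose increment moments are controlled by a function of the whole state (recall $W_n$ is not itself Markov) and the use of \eqref{eqex} to reach the large-state regime; the moment estimates, by contrast, are routine and identical to those in the proof of Theorem~\ref{explocero}.
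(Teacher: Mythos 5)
Your proposal is correct and takes essentially the same route as the paper: both reduce the statement to Theorem 2 of \cite{gmm} applied to the functional $\boldsymbol{u}^\top\boldsymbol{Z}_n$, verify the $(2+\delta)$-th centered moment condition $\xi(\bz)=\oo((\boldsymbol{u}^\top\bz )^{1+\delta} (\boldsymbol{u}^\top\boldsymbol{h}(\bz))/(\log(\boldsymbol{u}^\top\bz ))^{1+\alpha})$ via the bound \eqref{eq7} (the $C_r$-inequality plus Marcinkiewicz--Zygmund) together with \eqref{cond3} and \eqref{cond4}, and invoke \eqref{eqex} for the accessibility ingredient. The paper's proof is simply a terser version of what you wrote.
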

\begin{proof}
Taking into account  Theorem 2 in \cite{gmm} {and} \eqref{eqex} {holds}, it is enough to {show} that
$\xi(\bz)=\oo((\boldsymbol{u}^\top\bz )^{1+\delta} (\boldsymbol{u}^\top\boldsymbol{h}(\bz))/(\log(\boldsymbol{u}^\top\bz ))^{1+\alpha})$. This later follows from (\ref{eq7}), \eqref{cond3} and \eqref{cond4}.
\end{proof}

\begin{remark} Theorem \ref{explocero} also holds if \eqref{cond1} and \eqref{cond2} are replaced by
 \begin{equation}\label{cond1'}
         \ev{|z_i+M_{0,i}(\bz)|^{1+\delta/2}}=\oo((\boldsymbol{u}^\top\bz )^{2+\delta} \boldsymbol{u}^\top\boldsymbol{h}(\bz))
     \end{equation}
  and 
\begin{equation}\label{cond2'}
         \ev{|M_{0,i}(\bz)-h_i(\bz)|^{2+\delta}}=\oo((\boldsymbol{u}^\top\bz )^{2+\delta} \boldsymbol{u}^\top\boldsymbol{h}(\bz)),
     \end{equation}
    respectively.
Indeed, (\ref{eq7}), \eqref{cond1'} and \eqref{cond2'} {yield} $\xi(\bz)=\oo((\boldsymbol{u}^\top\bz )^{2+\delta} \boldsymbol{u}^\top\boldsymbol{h}(\bz))$. {Therefore},
$$\frac{\xi(\bz)}{\sigma^2(\bz)(\boldsymbol{u}^\top\bz)^{1+\delta}}= \frac{\xi(\bz)}{(\boldsymbol{u}^\top\bz)^{2+\delta} (\boldsymbol{u}^\top\boldsymbol{h}(\bz))}\frac{(\boldsymbol{u}^\top\bz) (\boldsymbol{u}^\top\boldsymbol{h}(\bz))}{\sigma^2(\bz)}.$$
Hence, by assumption (\ref{limsup})  and $\xi(\bz)=\oo((\boldsymbol{u}^\top\bz )^{2+\delta} \boldsymbol{u}^\top\boldsymbol{h}(\bz))$, we deduce   $\xi(\bz)=\oo((\boldsymbol{u}^\top\bz)^{1+\delta} \sigma^2(\bz)).$ 

Analogously, in Theorem \ref{teo2} we can replace (\ref{cond3}) and (\ref{cond4}) with
 \begin{equation*}
         \ev{|z_i+M_{0,i}(\bz)|^{1+\delta/2}}=\oo(\sigma^2(\bz)(\boldsymbol{u}^\top\bz)^{\delta}/(\log(\boldsymbol{u}^\top\bz))^{1+\alpha}),
     \end{equation*}
     and 
      \begin{equation*}
          \ev{|M_{0,i}(\bz)-h_i(\bz)|^{2+\delta}}=\oo(\sigma^2(\bz)(\boldsymbol{u}^\top\bz)^{\delta}/(\log(\boldsymbol{u}^\top\bz))^{1+\alpha}),
     \end{equation*}
respectively.
Assumptions \eqref{limsup} and  \eqref{liminferior}
 could be intuitively interpreted for a critical MBPM as speed {conditions},  noticing that
$$\frac{(\boldsymbol{u}^\top\bz ) (\boldsymbol{u}^\top\boldsymbol{h}(\bz))}{\sigma^2(\bz)}=\frac{(\boldsymbol{u}^\top\bz)^{-1}\ev{\boldsymbol{u}^\top\boldsymbol{Z}_{n+1}\mid \boldsymbol{Z}_n=\bz}-1}{\var{\boldsymbol{u}^\top\boldsymbol{Z}_{n+1}\mid \boldsymbol{Z}_{n}=\bz} (\boldsymbol{u}^\top\bz)^{-2}}.$$

\end{remark}

\section{Limiting behaviour of the process}\label{sec4}
Consider a positive and twice continuously differentiable real function $\overline{h}(x)$ such that $\boldsymbol{u}^\top \boldsymbol{h}(\bz)=\overline{h}(\boldsymbol{u}^\top \bz )$, for $\bz\in \Z_+^p$.  We denote by $\{a_n\}_{n\in \Z_+}$ the solution of the {difference} equation
$$a_0=1,\qquad a_{n+1}=a_n+\overline{h}(a_n).$$ 

In the next result, we consider different normalizing sequence of constants that guarantee the convergence of the corresponding normalized process on the unlimited growth set.


\begin{theorem}\label{t3} Let ${(\boldsymbol{Z}_{k})_{k\in\Z_+}}$ be a  MBPM  satisfying Hypotheses A and B,  and (\ref{eqex}). Suppose that for every non-null vector $\bz\in \Z_+^p$:
\begin{enumerate}
\item[(i)] $h_i(\bz)= c_i(\boldsymbol{u}^\top\bz)^{\alpha}+\oo\left((\boldsymbol{u}^\top\bz)^{\alpha}\right)$ for each $i=1,\ldots, p$ and for some $\alpha<1$ and $\boldsymbol{c}=(c_1,\ldots,c_p)^\top\in \R^p$ such that $\boldsymbol{u}^\top \boldsymbol{c} > 0$.
\item[(ii)] $\sigma^2(\bz)=\nu(\boldsymbol{u}^\top\bz)^\beta+\oo\left((\boldsymbol{u}^\top\bz)^\beta\right)$ for some $\beta\leq 1+\alpha$ and $\nu>0$.
\item[(iii)] $\max_{1\leq i\leq p} \left\{\ev{|z_i+M_{0,i}(\bz)|^{1+\delta/2}}, \ev{|M_{0,i}(\bz)-h_i(\bz)|^{2+\delta}} \right\}=\OO\left((\sigma^2(\bz))^{1+\delta/2}\right)$ for some $\delta\in(0,1]$.
\item[(iv)] $\max_{1\leq i\leq p}\{|h_i(\bz)|\}=\OO\left((\boldsymbol{u}^\top\bz)^{\delta_1}\right)$ for some $\delta_1<1$.
\item[(v)]  For some $\tilde{\alpha}$, $1<\tilde{\alpha}\leq 2$, $\max_{1\leq i\leq p}\left\{z_i+h_{i}(\bz), \ev{|M_{0,i}(\bz)-h_i(\bz)|^{\tilde{\alpha}}} \right\}=\OO\left((\boldsymbol{u}^\top\bz)^{\delta_2\tilde{\alpha}}\right)$ for some $\delta_2<1$.
\end{enumerate}
Then \begin{enumerate}
\item[a)] If $\beta=1+\alpha$ and $\nu<2(\boldsymbol{u}^\top\boldsymbol{c})$, then for every vector
 ${\boldsymbol{x}}
\in\mathbb{R}^p$
$$\lim_{n\to\infty}\mathrm{P}\left(\frac{\boldsymbol{Z}_n}{n^{1/(1-\alpha)}}\leq
{\boldsymbol{x}} \mid \|\boldsymbol{Z}_n\|\to\infty\right)=F_{\boldsymbol{u}Z}\left({\boldsymbol{x}}\right).$$ \noindent
{Here} $F_{\boldsymbol{u}Z}$ {is} the distribution function associated with the random vector $\boldsymbol{u}Z$, and ${Z}$ {is} a random variable such that ${Z}^{1-\alpha}$ follows a gamma distribution with parameters $\frac{2(\boldsymbol{u}^\top\boldsymbol{c})-\nu\alpha}{\nu(1-\alpha)}$
and  $\frac{\nu(1-\alpha)^2}{2}$. {Note that} vector inequalities are evaluated component-wise.

\item[b)] If $0<\alpha<1$ and $\beta<\alpha +1$, then, on
$\{\| \boldsymbol{Z}_n\|\to\infty\}$, $n^{1/(\alpha-1)}\boldsymbol{Z}_n$ converges to $((\boldsymbol{u}^\top \boldsymbol{c})(1-\alpha))^{1/(1-\alpha)}\boldsymbol{u}$ in $L^1$.

Moreover, if $\beta\geq 3\alpha-1$ and
$\max\{\delta_1,\delta_2\}<(\beta-\alpha+1)/2$, then for every vector $\boldsymbol{x}\in\mathbb{R}^p$, it is verified that
$$\lim_{n\to\infty}\mathrm{P}\left(\frac{\boldsymbol{Z}_n-\boldsymbol{u}a_n}{\Lambda_n}\leq {\boldsymbol{x}}
\mid \| \boldsymbol{Z}_n\|\to\infty\right)=F_{\boldsymbol{u}U}\left({\boldsymbol{x}}\right).$$ \noindent {Here} $F_{\boldsymbol{u}U}\left({\boldsymbol{x}}\right)$ {is} the distribution function associated with the random vector $\boldsymbol{u}U$ {and} $U$ {is}  a random variable with standard normal distribution. {The normalization sequence is given by}  
$$\Lambda_n:=\begin{cases}\nu^{1/2}((\boldsymbol{u}^\top\boldsymbol{c})(1-\alpha))^{(3\alpha-1)/(2(1-\alpha))}
n^{\alpha/(1-\alpha)}(\log n)^{1/2} & \mbox{ if }
\beta=3\alpha-1,\cr (\nu(\beta-3\alpha+1)^{-1}(\boldsymbol{u}^\top\boldsymbol{c})^{\beta/(1-\alpha)}
((1-\alpha)n)^{(\beta-\alpha+1)/(1-\alpha)})^{1/2} & \mbox{ if }
\beta>3\alpha-1.
\end{cases}$$
\end{enumerate}
\end{theorem}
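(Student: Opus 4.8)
The plan is to reduce the multitype statement to a one-dimensional result for the scalar projection $W_n := \boldsymbol{u}^\top \boldsymbol{Z}_n$, and then upgrade the scalar limit to the vector limit by controlling the deviation of $\boldsymbol{Z}_n$ from the ray $\R_+ \boldsymbol{u}$. For the scalar part, note that by Proposition~\ref{p1} and Hypothesis~A,
\begin{equation*}
\evcond{W_{n+1}}{\mathcal{F}_n} = \boldsymbol{u}^\top \mathsf{m}(\boldsymbol{Z}_n + \boldsymbol{h}(\boldsymbol{Z}_n)) = W_n + \boldsymbol{u}^\top \boldsymbol{h}(\boldsymbol{Z}_n) = W_n + \overline{h}(W_n),
\end{equation*}
so $(W_n)$ is a (perturbed) scalar stochastic process whose drift is exactly $\overline h$ and whose conditional variance is $\sigma^2(\boldsymbol{Z}_n)$; under (i) these are asymptotically $(\boldsymbol{u}^\top\boldsymbol{c}) W_n^\alpha$ and (under (ii)) $\nu W_n^\beta$, and the deterministic skeleton $a_n$ solving $a_{n+1}=a_n+\overline h(a_n)$ satisfies $a_n \sim ((\boldsymbol{u}^\top\boldsymbol{c})(1-\alpha))^{1/(1-\alpha)} n^{1/(1-\alpha)}$ by a standard comparison/Euler argument. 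This puts us squarely in the framework of critical-type scalar processes with polynomial drift and diffusion coefficients; the moment bounds (iii)--(v) are precisely the Lindeberg/Lyapunov-type conditions needed to invoke the known limit theorems for such processes (of Klebaner-type, or the references \cite{gmm} and the migration literature cited in the introduction). I would cite those results for the scalar convergence: in regime (a), $W_n / n^{1/(1-\alpha)}$ converges (conditionally on $\{\|\boldsymbol{Z}_n\|\to\infty\}$, which coincides with $\{W_n\to\infty\}$) to $Z$ with $Z^{1-\alpha}$ gamma-distributed with the stated parameters — the parameters arising from the Feller/CIR-type SDE $\dif \mathcal{Z} = (\boldsymbol{u}^\top\boldsymbol{c})\mathcal{Z}^\alpha \dif t + \nu^{1/2}\mathcal{Z}^{\beta/2}\dif B_t$ after the time-change $t = \log n$ type scaling; in regime (b), the law of large numbers $W_n/a_n \to 1$ in $L^1$ holds, and the fluctuation $(W_n - a_n)/\Lambda_n$ is asymptotically standard normal, with $\Lambda_n$ chosen so that $\Lambda_n^2$ matches the accumulated variance $\sum_{k\le n} \sigma^2(\boldsymbol{u} a_k) \asymp \sum_{k \le n} \nu a_k^\beta$, which is why the two cases $\beta = 3\alpha-1$ (logarithmic correction, borderline summability after the $n^{2\alpha/(1-\alpha)}$ normalization) and $\beta > 3\alpha-1$ appear.

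The second ingredient is the dimensional collapse: I would show that $\boldsymbol{Z}_n - \boldsymbol{u}(\boldsymbol{u}^\top\boldsymbol{Z}_n) = \boldsymbol{Z}_n - \boldsymbol{u} W_n$ is asymptotically negligible relative to the normalizer. Writing $\boldsymbol{Z}_{n+1} = \mathsf{m}(\boldsymbol{Z}_n + \boldsymbol{h}(\boldsymbol{Z}_n)) + \boldsymbol{\varepsilon}_{n+1}$ with $\boldsymbol{\varepsilon}_{n+1}$ a martingale difference of conditional covariance given by \eqref{eq-conditional-variance}, and decomposing $\R^p = \R\boldsymbol{u} \oplus \boldsymbol{u}^\perp$-style along the Perron eigenspace, primitivity of $\mathsf{m}$ (Hypothesis~A) gives that $\mathsf{m}$ is a strict contraction with some rate $\rho < 1$ on the complementary invariant subspace. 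Hence the component of $\boldsymbol{Z}_n$ transverse to $\boldsymbol{u}$ is driven only by the accumulated projected noise and the transverse part of $\boldsymbol{h}(\boldsymbol{Z}_n)$ (which is $\oo(W_n^\alpha)$ coordinate-wise by (i), hence $\oo$ of the drift along $\boldsymbol{u}$), geometrically damped; a routine estimate (using (iii)--(v) for the noise moments, exactly as in the derivation of \eqref{eq7}) shows this transverse part is $\oo(n^{1/(1-\alpha)})$ a.s./in $L^1$ in regime (a) and $\oo(\Lambda_n)$ in regime (b). Then, on $\{\|\boldsymbol{Z}_n\|\to\infty\}$,
\begin{equation*}
\frac{\boldsymbol{Z}_n}{n^{1/(1-\alpha)}} = \boldsymbol{u}\,\frac{W_n}{n^{1/(1-\alpha)}} + \oo(1), \qquad \frac{\boldsymbol{Z}_n - \boldsymbol{u} a_n}{\Lambda_n} = \boldsymbol{u}\,\frac{W_n - a_n}{\Lambda_n} + \oo(1),
\end{equation*}
and Slutsky's theorem (in $\R^p$) delivers the stated vector convergence with limits $\boldsymbol{u}Z$ and $\boldsymbol{u}U$ respectively. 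For part (b)'s $L^1$-law of large numbers I would argue $\boldsymbol{Z}_n/a_n = \boldsymbol{u}(W_n/a_n) + \oo_{L^1}(1) \to \boldsymbol{u}$, using that $a_n \sim ((\boldsymbol{u}^\top\boldsymbol{c})(1-\alpha))^{1/(1-\alpha)} n^{1/(1-\alpha)}$, which also identifies the constant $((\boldsymbol{u}^\top\boldsymbol{c})(1-\alpha))^{1/(1-\alpha)}$.

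The main obstacle I anticipate is making the transverse-component estimate genuinely uniform and getting the right order: one must be careful that $\boldsymbol{h}(\boldsymbol{Z}_n)$ has a transverse part that is not just small in norm but small compared to the growth rate of $W_n$, and that the geometric contraction of $\mathsf{m}$ on $\boldsymbol{u}^\perp$ interacts correctly with the polynomially growing noise scale $\sigma^2(\boldsymbol{Z}_n) \asymp \nu W_n^\beta$ — the summation $\sum_k \rho^{2(n-k)} \sigma^2(\boldsymbol{Z}_k)$ is dominated by its last terms, giving transverse fluctuations of order $W_n^{\beta/2} = \oo(\Lambda_n)$ precisely when $\beta < \alpha+1$ and $\max\{\delta_1,\delta_2\} < (\beta-\alpha+1)/2$, which is exactly the hypothesis imposed; tracking this constant and confirming that conditioning on the (positive-probability, by \eqref{eqex} and Theorem~\ref{teo2}) event $\{\|\boldsymbol{Z}_n\|\to\infty\}$ does not distort the limit (via an absolute-continuity / Doob $h$-transform argument on the conditioned chain, or by the coupling already implicit in \cite{gmm}) is where the real work lies. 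The gamma-parameter bookkeeping in (a) — checking that the shape $\tfrac{2(\boldsymbol{u}^\top\boldsymbol{c})-\nu\alpha}{\nu(1-\alpha)}$ and scale $\tfrac{\nu(1-\alpha)^2}{2}$ come out of the scalar limit theorem applied with drift coefficient $\boldsymbol{u}^\top\boldsymbol{c}$ and variance coefficient $\nu$ at exponents $(\alpha,\beta)=(\alpha,1+\alpha)$ — is routine but must be done by matching with the one-dimensional statement being cited.
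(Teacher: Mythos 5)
Your proposal and the paper diverge at the very first step. The paper's entire proof is the observation that a MBPM is a multitype controlled branching process (MCBP) with control functions $\boldsymbol{\phi}_k(\bz)=\bz+\boldsymbol{M}_k(\bz)$, after which conclusions a) and b) are read off from Corollary 4.1 and Remark 4.1 of \cite{gmm06}, whose hypotheses are precisely (i)--(v). You instead set out to reprove that result from scratch, and your architecture --- project onto the Perron functional $W_n=\boldsymbol{u}^\top\bZ_n$, invoke one-dimensional limit theorems for near-critical processes with drift $\sim(\boldsymbol{u}^\top\boldsymbol{c})W_n^\alpha$ and conditional variance $\sim\nu W_n^\beta$, then collapse the transverse component using the spectral gap of the primitive matrix $\mathsf{m}$ --- is essentially the machinery inside the cited reference. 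The route is legitimate, but it buys nothing over the citation and, as written, it is a plan rather than a proof.

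Two points are genuine gaps rather than routine omissions. First, the scalar step: you appeal to ``known limit theorems of Klebaner type'' without identifying a statement whose hypotheses follow from (i)--(v); verifying that these moment conditions yield the gamma limit with shape $\frac{2(\boldsymbol{u}^\top\boldsymbol{c})-\nu\alpha}{\nu(1-\alpha)}$, the $L^1$ law of large numbers, and the CLT with the stated $\Lambda_n$, \emph{conditionally on} $\{\|\bZ_n\|\to\infty\}$, is exactly the content of the result being cited, so deferring it to an unnamed reference leaves the argument with no actual substance at its core. Second, the transverse collapse: since $\mathsf{m}\boldsymbol{v}=\boldsymbol{v}$ and $\boldsymbol{u}^\top\mathsf{m}=\boldsymbol{u}^\top$, the $\mathsf{m}$-invariant splitting of $\R^p$ is $\R\boldsymbol{v}\oplus\{\boldsymbol{x}:\boldsymbol{u}^\top\boldsymbol{x}=0\}$, so the quantity that is geometrically damped by the spectral gap is $\bZ_n-\boldsymbol{v}W_n$, not $\bZ_n-\boldsymbol{u}W_n$; the latter does not lie in an invariant subspace and your contraction argument does not apply to it as stated. (Note that Theorem \ref{Th4} of the paper indeed produces a limit along $\boldsymbol{v}$, so you should check carefully which eigenvector the collapse selects before matching it to the statement.) Finally, the estimate $\sum_k\rho^{2(n-k)}\sigma^2(\bZ_k)\asymp W_n^\beta$ is the right heuristic, but turning it into a uniform $\oo(\Lambda_n^2)$ bound on the explosion set is precisely what you label as ``where the real work lies,'' and that work is not done. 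If a self-contained proof is wanted, these steps must be carried out; otherwise the efficient move is the paper's: cast the process as an MCBP and cite \cite{gmm06} directly.
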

\begin{proof}
The proof follows from Corollary 4.1 and Remark 4.1 in \cite{gmm06}, {observing that} the MBPM {is} a multitype controlled branching process (MCBP) with $\phi_k(\bz)=\bz+M_k(\bz)$, $k\in \Z_+$ and $\bz\in \Z_+^p$.
\end{proof}

\begin{remark}\label{r2}
\begin{enumerate}
\item[]
\item[i)] Notice that under {the} assumptions of Theorem \ref{t3}, Theorem
\ref{teo2} holds, and therefore $\mathrm{P}(\| \boldsymbol{Z}_n\|\to\infty)>0$. In fact, from (i) and (ii), it is
derived that
$$
\label{liminf}\liminf_{\|z\|\to\infty}\frac{2({\boldsymbol{u}^\top\bz} )(\boldsymbol{u}^\top\boldsymbol{h}(\bz)) }{\sigma^{2}(\bz)}= 2(\boldsymbol{u}^\top\boldsymbol{c})\nu^{-1} \quad
\mbox{ if }\quad \beta=\alpha+1 \quad \mbox{ or } \quad \infty
\quad \mbox{ otherwise.}
$$ Finally,  using again (ii) and (iii), we have that
$\xi(\bz)=\oo((\boldsymbol{u}^\top \bz )^{1+\delta} (\boldsymbol{u}^\top\boldsymbol{h}(\bz))/(\log(\boldsymbol{u}^\top \bz ))^{1+\alpha})$. Note that,
{if} $\beta>\alpha +1$ or {if} $\beta=\alpha+1$ and $2(\boldsymbol{u}^\top\boldsymbol{c})\nu^{-1}<1$, {then}
$\mathrm{P}(\| \boldsymbol{Z}_n\|\to\infty)=0$ (see Theorem
\ref{explocero}).
\item[ii)] It
is not hard to {see} that $a_n \sim ((\boldsymbol{u}^\top \boldsymbol{c})(1-\alpha)n)^{(1-\alpha)^{-1}},$ as
$n\to\infty$.
\end{enumerate}
\end{remark}

\subsection{Scaling limits}
We now address the problem of scaling limits of MBPMs. To {this end}, we consider our process as particular case of a MCBP and {apply} the results on scaling limits for MCBPs provided in  \cite{bgmp}.

{Assume} the following hypothesis {holds}.
\begin{quote}
{\bf Hypothesis C:} There exist vectors $\boldsymbol{a},\boldsymbol{b} \in \R_+^p$ and $\boldsymbol{q},\boldsymbol{r} \in [0,1]^p,$ such that, as $\|\bz\|\to \infty$,
$$\boldsymbol{a}(\bz)\to \boldsymbol{a},\quad \boldsymbol{b}(\bz)\to \boldsymbol{b}, \quad \boldsymbol{q}(\bz)\to \boldsymbol{q} \quad\text{and}\quad \boldsymbol{r}(\bz)\to \boldsymbol{r}.$$
\end{quote}

Notice that under Hypothesis C, we have that, as $\|\bz\|\to \infty$,  $$\boldsymbol{h}(\bz)\to \boldsymbol{a}\circ \boldsymbol{q}-\boldsymbol{b}\circ \boldsymbol{r}.$$

Notice also that Hypothesis C implies Hypothesis B. Therefore, under Hypothesis C (replacing Hypothesis B) and the rest of assumptions of Theorem \ref{teo2}, we {have} that the process has a positive probability of unlimited growth. {We will assume that $P(\|\boldsymbol{Z}_n\|\to\infty)=1$.}

 {\textbf{}\textit{}\textbf{}{Furthermore, we suppose that in each generation the immigration and emigration components are independent, that is the random variables $\{I_{k,i}(\bz),D_{k,j}(\bz):i\in\{1,\ldots,p\},\,j\in\{1,\ldots,p\},\,\bz\in\Z_+^p\}$ are independent, for every $k \in \Z_+$.}}
 
 {Denote} by $\stackrel{\mathcal{L}}{\longrightarrow}$ the weak convergence on the space of $\R^p$--valued càdlàg functions on $\R_+$ endowed with the Skorokhod metric.
\begin{theorem}\label{Th4}
Let ${(\boldsymbol{Z}_{k})_{k\in\Z_+}}$ be a MBPM  satisfying Hypothesis A, Hypothesis C with $\boldsymbol{a}\circ \boldsymbol{q}-\boldsymbol{b}\circ \boldsymbol{r} {\in\R_+^p\setminus\{\boldsymbol{0}\}}$, and the {following} assumptions:
\begin{enumerate}
    \item[(i)] $\ev{\|\bZ_0\|^2}$,  $\ev{\|\boldsymbol{I}_{0}(\bz)\|^4}$ and $\ev{\|\boldsymbol{X}_{0,1,i}\|^4}$ are finite for each $i=1,\ldots,p$, and $\bz \in \Z_+^p$,
    \item[(ii)] $\ev{D_{0,i}(\bz)^2}=\oo(\|\boldsymbol{z}\|)$ and $\ev{I_{0,i}(\bz)^2} =\oo(\|\boldsymbol{z}\|)$, as $\|\boldsymbol{z}\|\to\infty $ for $i=1,\ldots,p$,
    \item[(iii)] $\ev{D_{0,i}(\bz)^4}=\OO(\|\boldsymbol{z}\|^2)$ and $\ev{I_{0,i}(\bz)^4}=\OO(\|\boldsymbol{z}\|^2)$, as $\|\boldsymbol{z}\|\to\infty $ for $i=1,\ldots,p$.
\end{enumerate}
Then 
\begin{equation*}
    (n^{-1}\bZ_{\lfloor nt \rfloor})_{t\in\R_+} 
    \stackrel{\mathcal{L}}{\longrightarrow} 
    (\mathcal{Z}_t \boldsymbol{v})_{t\in\R_+}
    \qquad \text{as } n\to\infty,
\end{equation*}
     where $(\mathcal{Z}_t)_{t\in\R_+}$ is the pathwise unique strong solution of the SDE
\begin{equation*}
    \dif\mathcal{Z}_t = \boldsymbol{u}^\top \boldsymbol{\alpha}\dif t + \sqrt{\boldsymbol{u}^\top (\boldsymbol{v}\odot\boldsymbol{\mathsf{\Sigma}})\boldsymbol{u}\mathcal{Z}_t^+}\dif\mathcal{W}_t, \qquad t\in\R_+,
\end{equation*}
with initial value $\mathcal{Z}_0 = 0$,
$\boldsymbol{\alpha} = \boldsymbol{a}\circ \boldsymbol{q}-\boldsymbol{b}\circ \boldsymbol{r}$,  
 $\boldsymbol{\mathsf{\Sigma}} = (\Sigma_1,\ldots, \Sigma_p)$ 
    and $(\mathcal{W}_t)_{t\in\R_+}$ a standard Wiener process.

\end{theorem}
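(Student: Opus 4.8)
The plan is to realise the MBPM as a particular multitype controlled branching process (MCBP) and then to invoke the scaling-limit theorem for critical MCBPs established in \cite{bgmp}. Comparing the recursion \eqref{mod} with the generic MCBP evolution, the MBPM corresponds exactly to the control function $\phi_k(\bz)=\bz+\boldsymbol{M}_k(\bz)$, which governs how many individuals of each type reproduce at generation $k$ when the population is $\bz$. Thus the whole argument reduces to checking that, with this control function, Hypothesis A together with Hypothesis C and assumptions (i)--(iii) imply the hypotheses of the relevant theorem in \cite{bgmp}, and then to matching the coefficients of the limiting one-dimensional diffusion.

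First I would record the first two control moments. By construction $\ev{\phi_0(\bz)}=\bz+\boldsymbol{h}(\bz)$ with $\boldsymbol{h}(\bz)=\boldsymbol{a}(\bz)\circ\boldsymbol{q}(\bz)-\boldsymbol{b}(\bz)\circ\boldsymbol{r}(\bz)$, so Hypothesis C gives $\boldsymbol{h}(\bz)\to\boldsymbol{\alpha}=\boldsymbol{a}\circ\boldsymbol{q}-\boldsymbol{b}\circ\boldsymbol{r}$; in particular Hypothesis B holds, so the mean control is $\bz+\oo(\|\bz\|)$, which is the criticality-compatible control-mean condition required in \cite{bgmp}. For the second moments, using the assumed within-generation independence of $\{I_{k,i}(\bz),D_{k,j}(\bz)\}$ and the three-case definition of $M_{k,i}(\bz)$, one computes $\ev{M_{0,i}(\bz)^2}=q_i(\bz)\ev{I_{0,i}(\bz)^2}+r_i(\bz)\ev{D_{0,i}(\bz)^2}$, and the off-diagonal entries of $\var{\boldsymbol{M}_0(\bz)}$ from products of the independent coordinates; assumption (ii) then yields $\var{\phi_0(\bz)}=\var{\boldsymbol{M}_0(\bz)}=\oo(\|\bz\|)$ entrywise, so the control contributes nothing to the limiting diffusion. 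Likewise, the elementary bound $\ev{M_{0,i}(\bz)^4}=q_i(\bz)\ev{I_{0,i}(\bz)^4}+r_i(\bz)\ev{D_{0,i}(\bz)^4}=\OO(\|\bz\|^2)$ together with assumption (iii) supplies the fourth-order growth control on $\phi_0(\bz)$ needed for tightness, while (i) provides the finite fourth moments of the offspring laws and the finite second moment of $\bZ_0$ (hence $n^{-1}\bZ_0\to\boldsymbol{0}$ in $L^2$, fixing the initial value $\mathcal{Z}_0=0$); the standing assumption $\mathrm{P}(\|\bZ_n\|\to\infty)=1$ places us in the non-degenerate regime.

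With all hypotheses of \cite{bgmp} in place, the theorem there yields weak convergence of $(n^{-1}\bZ_{\lfloor nt\rfloor})_{t\in\R_+}$ in the Skorokhod sense to $(\mathcal{Z}_t\boldsymbol{v})_{t\in\R_+}$, the limit collapsing onto the Perron--Frobenius ray because $\mathsf{m}$ is primitive with eigenvalue $1$, and $\mathcal{Z}_t=\boldsymbol{u}^\top(\mathcal{Z}_t\boldsymbol{v})$ being the limit of $n^{-1}\boldsymbol{u}^\top\bZ_{\lfloor nt\rfloor}$. To read off the coefficients, note by \eqref{eq-conditional-expected-value} that $\evcond{\boldsymbol{u}^\top\bZ_{k+1}}{\mathcal{F}_k}-\boldsymbol{u}^\top\bZ_k=\boldsymbol{u}^\top\mathsf{m}(\bZ_k+\boldsymbol{h}(\bZ_k))-\boldsymbol{u}^\top\bZ_k=\boldsymbol{u}^\top\boldsymbol{h}(\bZ_k)\to\boldsymbol{u}^\top\boldsymbol{\alpha}$, which after the $n$-by-$n$ rescaling becomes the drift $\boldsymbol{u}^\top\boldsymbol{\alpha}\dif t$; and by Proposition \ref{p1}, $\sigma^2(\bz)=\boldsymbol{u}^\top((\bz+\boldsymbol{h}(\bz))\odot\boldsymbol{\mathsf{\Sigma}}+\var{\boldsymbol{M}_0(\bz)})\boldsymbol{u}=\sum_{i=1}^p z_i\,\boldsymbol{u}^\top\mathsf{\Sigma}_i\boldsymbol{u}+\oo(\|\bz\|)$, so that on $n^{-1}\bZ_{\lfloor nt\rfloor}\approx\mathcal{Z}_t\boldsymbol{v}$ the rescaled conditional variance tends to $\mathcal{Z}_t\sum_{i=1}^p v_i\,\boldsymbol{u}^\top\mathsf{\Sigma}_i\boldsymbol{u}=\boldsymbol{u}^\top(\boldsymbol{v}\odot\boldsymbol{\mathsf{\Sigma}})\boldsymbol{u}\,\mathcal{Z}_t$, yielding exactly the diffusion coefficient in the stated SDE; pathwise uniqueness of that SDE is classical for Feller-type branching diffusions with a constant immigration-type drift.

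The main obstacle I anticipate is bookkeeping but genuine: verifying that the moment hypotheses (i)--(iii) are precisely the ones demanded by the scaling-limit theorem in \cite{bgmp}, in particular that the bounds on $\ev{\|\phi_0(\bz)-\ev{\phi_0(\bz)}\|^2}$ and the fourth-order quantities hold in the \emph{uniform} form required there, and that the cross-moments of $\boldsymbol{M}_0(\bz)$ (where the within-generation independence of immigration and emigration is genuinely used) do not spoil the $\oo(\|\bz\|)$ variance estimate. Once that translation is carried out, the identification of the limiting SDE is the short computation sketched above.
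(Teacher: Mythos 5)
Your proposal is correct and follows essentially the same route as the paper: both rewrite the MBPM as a multitype controlled branching process with control $\boldsymbol{\phi}_k(\bz)=\bz+\boldsymbol{M}_k(\bz)$ and reduce the proof to verifying the hypotheses of the scaling-limit theorem (Theorem 3.3) in \cite{bgmp}, using the within-generation independence of immigration and emigration together with Hypothesis C and assumptions (i)--(iii) to control the first, second and fourth moments of the control function. Your additional sketch of how the drift $\boldsymbol{u}^\top\boldsymbol{\alpha}$ and diffusion coefficient $\boldsymbol{u}^\top(\boldsymbol{v}\odot\boldsymbol{\mathsf{\Sigma}})\boldsymbol{u}$ emerge is consistent with, though not spelled out in, the paper's proof, which delegates that identification to the cited theorem.
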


\begin{remark}

\begin{enumerate}
\item[]
\item {If, besides the assumptions given in the definition of the model, we consider that, for each $i \in \{1,\ldots,p\}$, $X_{k,j,i,1}, \ldots,X_{k,j,i,p}$ are independent random variables ($k \in \Z_+$, $j \in \N$),
 then the variance matrices are diagonal, i.e.} $\mathsf{\Sigma}_{i}=\mathrm{diag}(\var{X_{0,1,i,1}},\ldots,\var{X_{0,1,i,p}})$, and it is easy to obtain that $\boldsymbol{u}^\top (\boldsymbol{v}\odot\boldsymbol{\mathsf{\Sigma}})\boldsymbol{u}=\sum_{j=1}^p u_j^2\sum_{i=1}^p v_i \var{\boldsymbol{X}_{0,1,i,j}}$.
\item Let us {give one example} of emigration distributions {which satisfy} assumptions (ii) and (iii). For all $k\in\Z_+$, $i=1,\ldots,p$ and $\boldsymbol{z}\in\Z_+^p$, let $D_{k,i}(\bz)$ be 0 if $z_i=0$, otherwise, we define the distribution of $D_{k,i}(\bz)$ as $\mathrm{P}(D_{k,i}(\bz)=j) \propto \frac{1}{j^3}$, $j=1,\ldots,z_i$. It is easy to obtain that $\ev{D_{k,i}(\bz)^2}=\oo(z_i)$ and $\ev{D_{k,i}(\bz)^4}=\OO(z_i^2)$.
\end{enumerate}
\end{remark}
\section*{Appendix}
\begin{lemma}(Marcinkiewicz-Zygmund's inequality)\label{l3}
If  $\{X_n\}_{n\geq 1} $ is a  sequence of  independent and
identically distributed random variables with $E[X_1]=0$ and
$E[|X_1|^p]<\infty$, $p\geq 2$, then
$E[|\sum_{j=1}^nX_j|^p]=O(n^{p/2})$.
\end{lemma}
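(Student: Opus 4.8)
The plan is to deduce the stated order bound from the square-function form of the Marcinkiewicz--Zygmund inequality together with an elementary convexity (Jensen) argument that exploits the identical distribution of the summands. Recall that in its classical form the inequality guarantees, for every $p\ge 2$, the existence of a finite constant $B_p$ depending only on $p$ such that, for independent mean-zero random variables $X_1,\ldots,X_n$ with $E[|X_j|^p]<\infty$,
$$
E\left[\left|\sum_{j=1}^n X_j\right|^p\right]\le B_p\, E\left[\left(\sum_{j=1}^n X_j^2\right)^{p/2}\right].
$$
Once this is in hand, it remains only to bound the right-hand side by a constant times $n^{p/2}$.

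First I would control the square function. Since $p\ge 2$, the map $x\mapsto x^{p/2}$ is convex on $[0,\infty)$, so Jensen's inequality applied to the uniform average of $X_1^2,\ldots,X_n^2$ yields
$$
\left(\frac{1}{n}\sum_{j=1}^n X_j^2\right)^{p/2}\le \frac{1}{n}\sum_{j=1}^n \left(X_j^2\right)^{p/2}=\frac{1}{n}\sum_{j=1}^n |X_j|^p,
$$
and hence, pointwise, $\left(\sum_{j=1}^n X_j^2\right)^{p/2}\le n^{p/2-1}\sum_{j=1}^n |X_j|^p$. Taking expectations and using that the $X_j$ are identically distributed gives
$$
E\left[\left(\sum_{j=1}^n X_j^2\right)^{p/2}\right]\le n^{p/2-1}\sum_{j=1}^n E[|X_j|^p]=n^{p/2}\,E[|X_1|^p].
$$
Combining with the Marcinkiewicz--Zygmund bound produces $E\left[\left|\sum_{j=1}^n X_j\right|^p\right]\le B_p\,E[|X_1|^p]\,n^{p/2}$, which is exactly $O(n^{p/2})$ since $p$ and the common distribution are fixed.

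There are two routes for the only substantive ingredient, the square-function inequality. If one is content to cite it, the argument above is complete and the remaining work is purely the convexity reduction, whose sole subtlety is noting that $p\ge 2$ is precisely what makes $x^{p/2}$ convex (for $p=2$ the statement is in any case immediate, since orthogonality and the zero mean give $E[(\sum_{j=1}^n X_j)^2]=n\,E[X_1^2]=O(n)$). Alternatively, one may bypass the square function by invoking Rosenthal's inequality, which bounds $E[|\sum_{j=1}^n X_j|^p]$ by a constant times $\max\{\sum_{j=1}^n E[|X_j|^p],(\sum_{j=1}^n E[X_j^2])^{p/2}\}=\max\{n\,E[|X_1|^p],(n\,E[X_1^2])^{p/2}\}$; both terms are $O(n^{p/2})$ because $p/2\ge 1$. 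The main obstacle, should a fully self-contained argument be desired, is establishing the square-function (or Rosenthal) bound from scratch: the standard path symmetrizes the sum by subtracting an independent copy, conditions on the symmetrized variables, and applies Khintchine's inequality to the resulting Rademacher average to produce the $(\sum_{j=1}^n X_j^2)^{p/2}$ factor, after which Fubini and a desymmetrization step recover the bound for the original sum.
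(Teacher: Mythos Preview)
Your argument is correct: invoking the square-function form of the Marcinkiewicz--Zygmund inequality and then applying Jensen's inequality to the convex map $x\mapsto x^{p/2}$ (valid because $p\ge 2$) cleanly produces the bound $B_p\,E[|X_1|^p]\,n^{p/2}$, which is the desired $O(n^{p/2})$. The alternative route through Rosenthal's inequality that you sketch is also valid.

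By way of comparison, the paper does not prove this lemma at all; it simply records the statement and refers the reader to \cite{chow}, p.~387. Your write-up therefore supplies strictly more than the paper does. The trade-off is that your argument is not fully self-contained either: it rests on the square-function Marcinkiewicz--Zygmund inequality (or Rosenthal's inequality), which you cite rather than prove, though you correctly outline the standard symmetrization/Khintchine route should a from-scratch proof be required.
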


We refer the reader    to  \cite{chow}, p. 387 for the proof.

\subsection*{Proof of Theorem \ref{Th4}} We rewrite the MBPM as a controlled multitype branching process (see \cite{gmm}), with $\boldsymbol{\phi}_{k} (\boldsymbol{z})  =  \boldsymbol{z} + \boldsymbol{M}_{k}(\bz)$ for  $k\in\Z_+$ and $\bz\in\Z_+^p$, and apply Theorem 3.3 from \cite{bgmp}. Therefore, we only need to check Hypotheses 1--6 of {that} theorem. 

Hypothesis 1 follows trivially from (i).

For each 
$\boldsymbol{z}\in\Z_+^p$, $
   \boldsymbol{h}(\bz) = \ev{\boldsymbol{M}_0(\bz)}= \boldsymbol{a}(\bz)\circ \boldsymbol{q}(\bz)-\boldsymbol{b}(\bz)\circ \boldsymbol{r}(\bz)$, 
so $\boldsymbol{\varepsilon}(\boldsymbol{z}) = \ev{\boldsymbol{\phi}_{0} (\boldsymbol{z})} = \mathsf{\Lambda}\boldsymbol{z} + \boldsymbol{\alpha} + \boldsymbol{g}(\boldsymbol{z})$ {where} $\mathsf{\Lambda}$ {is} the identity matrix of order $p$, $\boldsymbol{\alpha} = \boldsymbol{a}\circ \boldsymbol{q}-\boldsymbol{b}\circ \boldsymbol{r}$ and $\boldsymbol{g}(\bz) = \boldsymbol{h}(\bz) -(\boldsymbol{a}\circ \boldsymbol{q}-\boldsymbol{b}\circ \boldsymbol{r})$.
Consequently, Hypothesis C implies that $\|\boldsymbol{g}(\bz)\|=\oo(1)$ as $\|\bz\| \to \infty$ and, therefore, Hypothesis 2 in \cite{bgmp} {holds}.

To verify Hypothesis 3, it is enough to show that $\var{\phi_{0,i}(\bz)} = \oo(\|\bz\|)$ as $\|\bz\|\to\infty$ for all $i=1,\ldots,p$. Indeed, taking into account the independence of the immigration and emigration components, {we have}
\begin{align*}
    \var{\phi_{0,i}(\bz)} &= \ev{M_{0,i}(\bz)^2} - \ev{M_{0,i}(\bz)}^2 \\
    &= q_i(\bz) \ev{I_{0,i}(\bz)^2} + r_i(\bz)\ev{D_{0,i}(\bz)^2} - \left(q_i(\bz) \ev{I_{0,i}(\bz)} - r_i(\bz)\ev{D_{0,i}(\bz)}\right)^2,
\end{align*}
and, by Hypothesis C and (ii), we {obtain} Hypothesis 3.

Lyapunov's inequality {yields}  $\ev{|X|^3}^{1/3} \leq \ev{|X|^4}^{1/4}$. 
{Furthermore,}
\begin{align*}
    \kappa_i(\bz) &= \ev{(\phi_{0,i}(\boldsymbol{z}) - \ev{\phi_{0,i}(\boldsymbol{z})})^4} \\
    &= \ev{M_{0,i}(\boldsymbol{z})^4} - 3\ev{M_{0,i}(\boldsymbol{z})}^4 - 4\ev{M_{0,i}(\boldsymbol{z})}\ev{M_{0,i}(\boldsymbol{z})^3} + 6 \ev{M_{0,i}(\boldsymbol{z})}^2\ev{M_{0,i}(\boldsymbol{z})^2}  \\
    &= q_i(\bz) \ev{I_{0,i}(\bz)^4} + r_i(\bz)\ev{D_{0,i}(\bz)^4} - 3\left(q_i(\bz) \ev{I_{0,i}(\bz)} - r_i(\bz)\ev{D_{0,i}(\bz)}\right)^4\\
    &\quad -4 \left(q_i(\bz) \ev{I_{0,i}(\bz)} - r_i(\bz)\ev{D_{0,i}(\bz)}\right)\left(q_i(\bz) \ev{I_{0,i}(\bz)^3} - r_i(\bz)\ev{D_{0,i}(\bz)^3}\right) \\
    &\quad + 6 \left(q_i(\bz) \ev{I_{0,i}(\bz)} - r_i(\bz)\ev{D_{0,i}(\bz)}\right)^2 \left(q_i(\bz) \ev{I_{0,i}(\bz)^2} + r_i(\bz)\ev{D_{0,i}(\bz)^2}\right).
\end{align*}
{Hence,} Hypothesis 4 in \cite{bgmp} holds as a consequence of Hypothesis C, (ii) and (iii).

Hypothesis A {yields} Hypothesis 5 because $\mathsf{\Lambda}$ is the identity matrix which implies $\Tilde{\mathsf{m}} = \mathsf{m}\mathsf{\Lambda} = \mathsf{m}$.

Finally, Hypothesis 6 {follows from the fact that we are assuming that $P(\|\boldsymbol{Z}_n\|\to\infty)=1$ and (iv) of Remark 3.1 in \cite{bgmp}}. Hence, the result follows from the aforementioned Theorem 3.3 in \cite{bgmp}.
\proofend

\section*{Acknowledgements}
We would like to thank the referee for her/his comments that helped us improve the paper.

\section*{Funding}
The authors  are supported by grant PID2019-108211GB-I00 funded by MICIU/AEI/10.13039/
501100011033. P. Mart\'in-Ch\'avez is also grateful to the Ministerio de Ciencias, Innovación y Universidades for support from a predoctoral fellowship Grant No.\ FPU20/06588.
\bibliographystyle{plain}
\bibliography{ref}

\end{document}